\newtheorem{theorem}{Theorem}[section]
\newtheorem*{thm}{Main Theorem}
\newtheorem{lemma}[theorem]{Lemma}
\newtheorem{proposition}[theorem]{Proposition}
\theoremstyle{definition}
\newtheorem{definition}[theorem]{Definition}
\newtheorem{theorem-definition}[theorem]{Theorem-Definition}
\theoremstyle{remark}
\newtheorem{remark}[theorem]{Remark}
\numberwithin{equation}{section}
\newcommand{\lam}{\lambda}
\newcommand{\C}{\mathbb{C}}
\newcommand{\R}{\mathbb{R}}
\renewcommand{\P}{\mathbb{P}}
\newcommand{\A}{\mathcal{A}}
\renewcommand{\O}{\mathcal{O}}
\renewcommand{\L}{\mathcal{L}}
\newcommand{\J}{\mathcal{J}}
\newcommand{\F}{\mathcal{F}}
\newcommand{\M}{\mathcal{M}}
\DeclareMathOperator{\supp}{Supp}
\renewcommand{\epsilon}{\varepsilon}
\begin{document}

\title[]{Propagation of equilibrium states in stable families of endomorphisms of $\P^k(\mathbb C)$.}

\author{Maxence Brevard}
\author{Karim Rakhimov}

\address{Université de Toulouse - IMT, UMR CNRS 5219, 31062 Toulouse Cedex, France}
\email{maxence.brevard@univ-toulouse.fr, mbrevard@hotmail.fr}
\address{V.I. Romanovskiy Institute of Mathematics of Uzbek Academy of Sciences,  Tashkent, Uzbekistan}

\curraddr{}

\email{karimjon1705@gmail.com}

\subjclass[2010]{}
\date{}

\keywords{}

\begin{abstract}

We prove that, within any holomorphic  family of endomorphisms of $\P^k(\mathbb C)$ in any dimension $k\geq 1$ and algebraic degree $d\geq 2$, the measurable holomorphic motion associated to dynamical stability in the sense of Berteloot-Bianchi-Dupont
preserves the class of equilibrium states
associated with weight functions $\psi$ satisfying $\sup\psi - \inf \psi < \log d$.

\end{abstract}

\maketitle

\section{Introduction}\label{s:intro}

Let $f : X \to X$ be a continuous map on a  compact metric space $X$.
Given a Borel bounded function $\psi: X\to \mathbb R$, an \emph{equilibrium state} with respect to the \emph{weight} $\psi$ is an $f$-invariant probability measure $\nu$ on $X$ maximizing the pressure functional
$P_\nu(\psi) := h_\nu(f) + \int \psi \nu$, where $h : \nu \mapsto h_\nu(f)$ is the measure-theoretic entropy. The existence of equilibrium states follows from a compactness argument, whereas their uniqueness is a strong statistical property. In this paper, we are interested in the persistence of equilibrium states and their uniqueness in \emph{stable families of endomorphisms of $\P^k=\P^k (\C)$}, which we now define.\\

Given a (connected)
complex manifold $M$, a holomorphic family
$\{f_\lam : \P^k \to \P^k\}_{\lam \in M}$
of endomorphisms of $\mathbb P^k$ parametrized by $M$ is
a holomorphic map $F : M\times \P^k \to M \times \P^k$ of the form  $F(\lam,z) = (\lam, f_\lam(z))$  such that all the maps $f_\lam$
are holomorphic endomorphisms of $\P^k$ of the same algebraic degree $d \ge 2$.
In dimension $k=1$, the notion of stability within such families was introduced by Ma{\~n}\'e-Sad-Sullivan \cite{MSS83}
and Lyubich \cite{Ly83a}. A very fruitful 
characterization based on potential theory
was later introduced by DeMarco \cite{dM03}, see also  \cite{Prz85,Si81}.
The family $\{f_\lam : \P^1 \to \P^1\}_{\lam \in M}$  is said to be \emph{stable} if  there exists a \emph{holomorphic motion} of the Julia sets
(i.e., the supports of the measures of maximal entropy of $f_\lam$ \cite{Ly83b})
near every $\lam_0 \in M$, namely  there exists a neighbourhood $U$ of  $\lambda_0$ and,
for every $\lam \in U$, an injective  map 
$\Phi_{\lam_0,\lam} : J_{\lam_0} \to J_\lam$
depending 
holomorphically on $\lam$. By Hurwitz theorem,
these maps are automatically continuous in $z\in J_{\lam_0}$, hence they
 define a topological conjugacy between the systems $(J_{\lam_0}, f_{\lam_0})$ and $(J_\lam, f_\lam)$. 
 In this context, the action of the holomorphic motion over equilibrium states
 is straightforward.
If $\nu$ is a $f_{\lam_0}$-invariant measure  with support in $J_{\lam_0}$, the measure $\nu_\lam := (\Phi_{\lam_0,\lam})_* \nu$ inherits all the statistical properties from $\nu$.
In particular, if $\nu$ is an equilibrium state with respect to a Hölder continuous weight, since the map $\Phi_{\lam_0,\lam}$ itself is a Hölder continuous conjugacy (see for instance \cite{Duj06}), the measure $\nu_\lam$ is still an equilibrium state with respect to a Hölder continuous weight. We refer to \cite{PU10} as a general reference for the thermodynamical formalism in dimension $1$. \\

The definition and characterization of stability for families in 
any dimension $k\geq 1$ are due to  Berteloot-Bianchi-Dupont  \cite{BBD18, B19},
see also \cite{BB18}
and \cite{BB07, B18, BB23, Pha05} for further characterizations.
As Hurwitz Theorem 
does not hold in higher dimension,
the approach
relies on ergodic and pluripotential techniques.
In particular,
the holomorphic motion as above is replaced
by a more adapted measure-theoretic notion.
Consider the space $\J$ of 
all holomorphic maps $\gamma : M \to \P^k$ such that $\gamma(\lam) \in J_\lam:=\supp \mu_\lam$  for any parameter $\lam \in M$, where $\mu_\lambda$ is the unique ergodic measure of maximal entropy of $f_\lambda$, see \cite{BD01,DS10,FS94}. 
Define the dynamical system $\F : \J \to \J$ induced on $\J$ by
$\F(\gamma)(\lam) := f_\lam(\gamma(\lam))$, for all $\gamma \in \J$ and $\lam \in M$.
For each parameter $\lam \in M$, we 
also
consider the evaluation map $p_\lam : \J \to J_\lam$ defined by $p_\lam(\gamma) := \gamma(\lam)$, where $\gamma\in\mathcal{J}$.

 \begin{definition} \label{d:nu_web} Fix $\lambda_0\in M$. Let $\nu$ be an $f_{\lambda_0}$-invariant probability measure supported on $J_{\lambda_0}$. A
\emph{$(\lambda_0, \nu)$-web}
(or \emph{$\nu$-web} for brevity) is an
$\mathcal F$-invariant compactly supported Borel probability measure $\mathcal{M}$ on $\mathcal{J}$ such that
    $(p_{\lambda_0})_*\mathcal{M}=\nu$.
\end{definition}
 Dynamical stability in the sense of Berteloot-Bianchi-Dupont is equivalent to the existence of a  special (\emph{acritical}, see Remark \ref{r:acritical}) \emph{equilibrium web}, namely a probability measure $\M$ which is a $(\lam, \mu_\lam)$-web for every parameter $\lam \in M$. Such a probability measure
 is also called a
 \emph{measurable holomorphic motion}.
The construction was generalized by
Bianchi and the second author
\cite{BR22}
to a larger class of measures
including, among others,  all measures whose measure-theoretic entropy is larger than $(k-1)\log d$.
This class includes, for instance, all equilibrium states with respect to any H\"older continuous function $\psi$ with $\max \psi - \min \psi < \log d$, whose existence, uniqueness, and properties were recently proved in \cite{BD20,BD22,SUZ,UZ13}, see also \cite{Dup10}. \\
 
In this context, it is a natural question 
whether
the class of equilibrium states is preserved by the measurable holomorphic motions, i.e., if, given
an equilibrium state $\nu$
for $f_{\lam_0}$, the measures $\nu_\lam := (p_\lam)_*\M$ also satisfy the same property, where $\mathcal{M}$ is a $(\lam_0,\nu)$-web.
Observe that, if $f_{\lam_0}$ is hyperbolic (i.e., uniformly expanding on $J_{\lam_0}$), the measurable holomorphic motion is indeed a holomorphic motion in a neighborhood of $\lam_0$, hence gives a
topological conjugacy on the Julia sets
(see for instance \cite{Jon98}). On the other hand, unlike in dimension 1 or in specific families \cite{AB23},
we do not know whether the presence of a hyperbolic parameter in a stable family implies that all parameter are hyperbolic, hence the propagation of the equilibrium states in a stable family is not clear even in the presence of hyperbolic parameters.
More generally, the theory of stability in higher dimensions is known to have a number of different features with respect to the one-dimensional counterpart, see for instance \cite{AB22, BT17, Bie19, Duj17, Taf21}.\\

In this note, we give the following  answer
to the above problem.

\begin{thm} \label{t:main}
Let $\{f_\lam : \P^k \to \P^k\}_{\lam \in M}$ be a stable
family of endomorphisms of $\P^k$ of algebraic degree $d\ge 2$.
Consider a parameter $\lam_0 \in M$ and a Borel function $\psi : \P^k \to \R$ satisfying $\sup \psi - \inf \psi < \log d$.
Assume there exists an 
(respectively  a unique) equilibrium state $\nu$ for the system $(J_{\lam_0},f_{\lam_0})$ with respect to $\psi$
and let
$\M$ be a 
$(\lam_0, \nu)$-web. 
Then
\begin{enumerate}
\item[{\rm (i)}] the $(\lam_0, \nu)$-web $\M$ is unique; 
\item[{\rm (ii)}] for every parameter $\lam \in M$, there exists a Borel function $\psi_\lam : \P^k \to \R$ such that $\nu_\lam := (p_\lam)_*\M$ is a (resp.  the unique) equilibrium state for the system
$(J_\lam,f_\lam)$ with respect to $\psi_\lam$.

\end{enumerate}
\end{thm}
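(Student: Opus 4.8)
The plan is to use the evaluation maps $p_\lam$ to turn the web $\M$ into a \emph{measurable conjugacy} between the systems over different parameters, and then to run a pressure comparison split at the entropy threshold $(k-1)\log d$: the hypothesis $\sup\psi-\inf\psi<\log d$ controls the low-entropy range, while the web structure of \cite{BBD18,BR22} controls the high-entropy range. First I would record that $\nu$ has large entropy. Testing the pressure against the measure of maximal entropy $\mu_{\lam_0}$ (of entropy $k\log d$) gives, for the topological pressure $P_{\lam_0}(\psi):=\sup_\eta P_\eta(\psi)$ of $(J_{\lam_0},f_{\lam_0})$, the bound $P_{\lam_0}(\psi)\ge k\log d+\inf\psi$; hence
\[
h_\nu(f_{\lam_0})=P_\nu(\psi)-\int\psi\,\nu\ \ge\ P_{\lam_0}(\psi)-\sup\psi\ \ge\ k\log d-(\sup\psi-\inf\psi)\ >\ (k-1)\log d .
\]
Thus $\nu$ lies in the class of \cite{BR22}. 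The crucial structural input I would invoke from \cite{BBD18,BR22} is that every $\F$-invariant measure on $\J$ of entropy larger than $(k-1)\log d$ is \emph{acritical}, so that the graphs in its support are pairwise disjoint; consequently each $p_\mu$ is injective almost everywhere and realises a measure-theoretic isomorphism intertwining $\F$ with $f_\mu$. In particular all such measures are carried, off a set null for every invariant measure of entropy $>(k-1)\log d$, by a single canonical acritical lamination $\hat\J\subset\J$ on which the maps $\Phi_{\mu,\mu'}:=p_{\mu'}\circ p_\mu^{-1}$ are well-defined Borel conjugacies.

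Granting this, part (i) is immediate: any $(\lam_0,\nu)$-web $\M$ satisfies $h_\M(\F)\ge h_\nu>(k-1)\log d$ (entropy cannot drop to the factor $\nu$), hence is acritical with $p_{\lam_0}$ injective $\M$-a.e., so $\M=s_*\nu$ for the canonical section $s=(p_{\lam_0}|_{\hat\J})^{-1}$; since $s$ does not depend on $\M$, the web is unique. For part (ii) I set $\nu_\lam:=(p_\lam)_*\M$ and $\Phi_\lam:=p_\lam\circ p_{\lam_0}^{-1}$, a Borel conjugacy $(J_{\lam_0},\nu,f_{\lam_0})\to(J_\lam,\nu_\lam,f_\lam)$, so that $h_{\nu_\lam}(f_\lam)=h_\nu(f_{\lam_0})$. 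I then define $\psi_\lam:=\psi\circ\Phi_\lam^{-1}$ where the latter is defined and extend it by a constant in $[\inf\psi,\sup\psi]$ elsewhere; this is Borel (the conjugacy being a Borel isomorphism mod $0$) and satisfies $\sup\psi_\lam-\inf\psi_\lam\le\sup\psi-\inf\psi<\log d$. Because $\Phi_\lam$ transports $\nu$ to $\nu_\lam$,
\[
P_{\nu_\lam}(\psi_\lam)=h_{\nu_\lam}(f_\lam)+\int\psi_\lam\,\nu_\lam=h_\nu(f_{\lam_0})+\int\psi\,\nu=P_\nu(\psi)=P_{\lam_0}(\psi).
\]

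To see that $\nu_\lam$ maximises, I test an arbitrary $f_\lam$-invariant $\eta$ by splitting on its entropy. If $h_\eta(f_\lam)\le(k-1)\log d$, then using only the oscillation bound,
\[
P_\eta(\psi_\lam)\le(k-1)\log d+\sup\psi< k\log d+\inf\psi\le P_{\lam_0}(\psi)=P_{\nu_\lam}(\psi_\lam).
\]
If $h_\eta(f_\lam)>(k-1)\log d$, I lift $\eta$ to an acritical $\hat\eta$ on $\hat\J$ with $(p_\lam)_*\hat\eta=\eta$ and $h_{\hat\eta}(\F)=h_\eta$, and set $\eta_0:=(p_{\lam_0})_*\hat\eta$, so $h_{\eta_0}=h_\eta$. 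As $\hat\eta$ lives on the canonical lamination, where $\psi_\lam\circ p_\lam=\psi\circ p_{\lam_0}$ by construction, I obtain $\int\psi_\lam\,\eta=\int\psi\,\eta_0$ and hence $P_\eta(\psi_\lam)=P_{\eta_0}(\psi)\le P_{\lam_0}(\psi)=P_{\nu_\lam}(\psi_\lam)$. In both cases $\nu_\lam$ wins, so it is an equilibrium state for $\psi_\lam$. When $\nu$ is the \emph{unique} equilibrium state for $\psi$, any equilibrium state $\eta$ for $\psi_\lam$ must, by the strict low-entropy estimate, have $h_\eta>(k-1)\log d$; its projection $\eta_0$ then satisfies $P_{\eta_0}(\psi)=P_{\lam_0}(\psi)$, forcing $\eta_0=\nu$, whereupon $\hat\eta$ is a $(\lam_0,\nu)$-web, equal to $\M$ by part (i), and $\eta=(p_\lam)_*\M=\nu_\lam$.

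The main obstacle is the structural input invoked above: that the acriticality of \cite{BBD18,BR22} applies to \emph{every} $\F$-invariant measure of entropy $>(k-1)\log d$, uniformly enough that all relevant lifts share one canonical lamination on which fibrewise injectivity — and thus the identity $\psi_\lam\circ p_\lam=\psi\circ p_{\lam_0}$ — holds off a set that is simultaneously null for all such measures. Making this precise, and verifying that entropy is genuinely \emph{preserved} (not merely non-increasing) under both projections $p_{\lam_0}$ and $p_\lam$, is the technical heart of the argument; once it is in place, the remainder reduces to the elementary pressure estimates above.
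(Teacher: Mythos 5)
Your overall architecture is the same as the paper's: derive $h_\nu(f_{\lam_0})>(k-1)\log d$ from the oscillation hypothesis by testing the pressure against $\mu_{\lam_0}$, use the web to transport entropy and the weight function, and split the competing measures at the entropy threshold $(k-1)\log d$ (your low-entropy estimate is exactly the paper's replacement-by-$\mu_\lam$ step). The gap is the structural input you yourself flag as ``the main obstacle'': the existence of a single canonical lamination $\hat\J$ that simultaneously carries \emph{every} $\F$-invariant measure of entropy $>(k-1)\log d$, on which all evaluation maps are injective and the identity $\psi_\lam\circ p_\lam=\psi\circ p_{\lam_0}$ holds. This is not available off the shelf: \cite[Corollary 1.5]{BR22} produces, for each ergodic measure downstairs with entropy $>(k-1)\log d$, an acritical web \emph{together with its own} invariant lamination; it neither asserts that an arbitrary invariant measure upstairs of large entropy is acritical, nor that the laminations attached to two different measures are compatible (a graph in one lamination may a priori cross a graph in the other). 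Since your proof of (i) (via the canonical section $s=(p_{\lam_0}|_{\hat\J})^{-1}$), the key identity $P_\eta(\psi_\lam)=P_{\eta_0}(\psi)$ in the high-entropy case of (ii), and the final uniqueness argument all rest on this unproved claim, the proposal as written does not close.

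The paper circumvents the need for a universal lamination. For (i) it proves the elementary Lemma~\ref{l:uniqueness_web}: for a lamination $\L$ one has $\L\cap\A=\L\cap p_\lam^{-1}(p_\lam(\L\cap\A))$ for every Borel $\A$, so any web giving full mass to $\L$ is determined by its projection; uniqueness then only requires checking that every $(\lam_0,\nu)$-web charges the \emph{one} lamination supplied by \cite{BR22}. For (ii) it does not compare a competitor $\eta$ with $\nu_\lam$ at the parameter $\lam$ over a common lamination: it builds the competitor's own web $\M'$, pushes it down to $\lam_0$, and uses Proposition~\ref{p:nuweb}(ii) to obtain $P_\omega(\psi)=P_{\omega_\lam}(\psi_\lam)\ge P_{\nu_\lam}(\psi_\lam)=P_\nu(\psi)$, so that maximality is invoked for $\nu$ at $\lam_0$ rather than verified for $\nu_\lam$ at $\lam$. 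If you want to keep your formulation, the missing compatibility between the lamination of $\M$ and that of the lift of $\eta$ could be obtained by observing that both supports are contained in the support of the invariant web $\tfrac12(\M+\M')$, to which the crossing/acriticality argument of \cite{BBD18} applies; but this must be stated and proved, and it is precisely where the real content of the theorem lies.
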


The condition $\sup \psi - \inf \psi < \log d$ implies that the measure-theoretic entropy of $\nu$ satisfies $ h_\nu(f_{\lam_0})>(k-1)\log d$
(hence, in particular, it has strictly positive Lyapunov exponents \cite{dTh08, D12}, and falls within the scope of \cite{BR22}). The existence of  a $(\lam_0,\nu)$-web then follows from \cite[Corollary 1.5]{BR22}. 
The proof of the above result relies
on the existence of a $\mathcal M$-full-measure subset of $\mathcal J$ of elements whose graphs do not intersect
and is given in Section \ref{s:propagation}.
In Section \ref{s:big_system} we also give an interpretation of $\mathcal M$ as an equilibrum
state
for the dynamical system $(\mathcal J, \mathcal F)$, see
Proposition \ref{p:M-eq-state}.
Following this interpretation, it is also natural to study the distribution of repelling graphs with respect to $\mathcal M$, which corresponds to the distribution of repelling points for the systems $(\mathcal J, \mathcal F)$. This is done in \cite{BB23b}.

\subsection*{Acknowledgments}
The authors would like to thank Fabrizio Bianchi for
helpful discussions during the preparation of this paper, and Yûsuke Okuyama and Gabriel Vigny for their 
very useful comments.

\section{Proof of the Main Theorem} \label{s:propagation}

Given a holomorphic family $\{f_\lam : \P^k \to \P^k\}_{\lam \in M}$ of endomorphisms of $\mathbb P^k$,
observe that for every parameter $\lam \in M$, the evaluation map $p_\lam : \J \to J_\lam$ is continuous, and that $p_\lam \circ \F = f_\lam \circ p_\lam$.
The map $p_\lam$ may turn out not to be injective. This is due to the possible
existence of some crossings between graphs in $\J$.
The notion of lamination, introduced in \cite{BBD18}, will be central in our approach.

\begin{definition}
A \emph{lamination} is a  Borel subset $\L \subset \J$ such that for any maps $\gamma_1, \gamma_2 \in \L$, we have
\[\Gamma_{\gamma_1} \cap \Gamma_{\gamma_2} \neq \emptyset \implies \gamma_1 = \gamma_2, \]
where $\Gamma_\gamma \subset M\times \P^k$ denotes the graph of an element $\gamma\in \mathcal{J}$.
If $\M$ is a Borel probability measure on $\J$, we say that $\L$ is an \emph{$\M$-lamination} if, furthermore, $\M(\L) = 1$.
\end{definition}

Given an $\F$-invariant lamination $\L \subset \J$, one can consider the restriction $p_\lam|_\L \: : \L \to \L_\lam$ of the application $p_\lam$, where $\L_\lam := p_\lam(\L)$. The map $p_\lam|_\L$ is automatically $1$-to-$1$. Yet, observe that $p_\lam|_\L^{-1}$ may not be continuous.

\begin{lemma}\label{l:uniqueness_web}
Let $\L \subset \J$ be a lamination, and fix a parameter $\lam \in M$. Then,
for any Borel subset $\A \subset \J$, we have
\[\L \cap \A = \L \cap p_\lam^{-1}(p_\lam(\L \cap \A)).\]
In particular, for any probability measure $\nu$ on $\P^k$, there exists at most one $(\lam,\nu)$-web $\M$ such that
$\M(\L) = 1$.
\end{lemma}
\begin{proof}
Let $\A \subset \J$ be a Borel set.
The direct inclusion is immediate. For the converse one, take $\gamma \in \L$
such that $\gamma(\lam) \in p_\lam(\L \cap \A)$.
Take
$\gamma' \in \L \cap \A$ such that $\gamma(\lam) = \gamma'(\lam)$.
Since $\L$ is a lamination, we have $\gamma = \gamma'$. In particular, we have $\gamma \in \L \cap \A$, which proves the desired equality.\\

Let now $\mathcal M$ be as in the statement. We have
\begin{equation}\label{eq:m(a)=nu(a)}
    \M(\A) = \M(\L \cap \A) = \M(\L \cap p_\lam^{-1}(p_\lam(\L \cap \A))) = \nu(p_\lam(\L \cap \A)).
\end{equation}
The last expression does not depend on the choice of the $(\lam,\nu)$-web $\M$. The proof is complete.
\end{proof}

From now on, we will assume that the family $\{f_\lam : \P^k \to \P^k\}_{\lam \in M}$ is stable.
Our goal is to propagate the class of equilibrium states. To do this, we will make use of the $\M$-laminations.
Given a lamination $\L$,
for every $\lam_0, \lam \in M$ we can consider the bijection $\Phi_{\lam_0,\lam}^{\mathcal L} := p_\lam \circ p_{\lam_0}|_\L^{-1} : \L_{\lam_0} \to \L_\lam$.
\\

\begin{proposition}\label{p:nuweb}
Let $\{f_\lam : \P^k \to \P^k\}_{\lam \in M}$ be a stable 
family of  holomorphic endomorphisms of $\P^k$ of 
algebraic
degree $d \ge 2$. Let $\lam_0 \in M$ and $\nu$ be an ergodic $f_{\lam_0}$-invariant measure with $h_{\nu}(f_{\lam_0}) > (k-1) \log d$. Then
\begin{enumerate}
\item[{\rm (i)}] there exists a unique $(\lam_0, \nu)$-web $\M$. Moreover, $\M$ is ergodic  and for any parameter $\lam \in M$, we have $h_\M(\F) =$ $h_{\nu_\lam}(f_\lam)$, where $\nu_\lam:=(p_\lam)_*\mathcal{M}$;
\item[{\rm (ii)}] if $\L$ is an $\F$-invariant $\M$-lamination and $\psi : J_{\lambda_0}\to \R$ is a Borel measurable function,
then for any parameter $\lambda \in M$, we have 
$P_{\nu_\lam}(\psi_\lam) = P_{\nu}(\psi)$, where $\psi_\lam :=  \psi \circ (\Phi_{\lam_0,\lam}^{\mathcal L})^{-1} \cdot \mathds{1}_{\L_\lam}$.
\end{enumerate}
\end{proposition}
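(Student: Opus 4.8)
The plan is to reduce everything to a single structural input --- that the web $\M$ is carried by an $\F$-invariant lamination --- and then to transport all the dynamical and thermodynamical data through the evaluation maps $p_\lam$, which become measurable isomorphisms once restricted to such a lamination. Concretely, the existence of a $(\lam_0,\nu)$-web is provided by \cite[Corollary 1.5]{BR22} (the hypothesis $h_\nu(f_{\lam_0}) > (k-1)\log d$ places us in its scope), so the real work is to produce the lamination and then to run the transfer.

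The \emph{main obstacle} is the first step: establishing that every $(\lam_0,\nu)$-web admits an $\F$-invariant $\M$-lamination $\L$. The hypothesis $h_\nu(f_{\lam_0}) > (k-1)\log d$ forces $\nu$ to have strictly positive Lyapunov exponents (\cite{dTh08, D12}), and it is precisely this positivity that prevents the graphs of elements of $\J$ from crossing on a set of full $\M$-measure, following the pluripotential machinery of \cite{BBD18, BR22}. I would phrase this as the statement that the set of pairs of distinct crossing graphs is $\M\times\M$-null, extract from it a genuine Borel lamination of full measure, and finally gain $\F$-invariance by intersecting with the forward and backward $\F$-iterates, using that $\M$ is $\F$-invariant. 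This is the only place where the dynamics of $\P^k$ genuinely enters; the remaining steps are soft.

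Granting the lamination $\L$, part (i) follows by abstract ergodic theory. As observed before the statement, the restriction $p_\lam|_\L : \L \to \L_\lam$ is injective and Borel, hence by Lusin--Souslin it is a Borel isomorphism onto $\L_\lam$, a set of full $\nu_\lam$-measure; moreover it intertwines $\F$ with $f_\lam$ and pushes $\M$ forward to $\nu_\lam$. Thus $(\J,\F,\M)$ is measurably isomorphic to $(J_\lam,f_\lam,\nu_\lam)$ for every $\lam$, and in particular to $(J_{\lam_0},f_{\lam_0},\nu)$. Ergodicity of $\M$ then follows from ergodicity of $\nu$, and since entropy is an isomorphism invariant we obtain $h_\M(\F) = h_\nu(f_{\lam_0}) = h_{\nu_\lam}(f_\lam)$ for all $\lam$. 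For uniqueness I would use convexity: the set of $(\lam_0,\nu)$-webs is convex, so given two such webs their average is again a web, hence carried by a lamination $\L$ by the first step; both webs are then carried by $\L$ (each is dominated by twice the average), and Lemma~\ref{l:uniqueness_web} forces them to coincide.

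Finally, part (ii) is a short change of variables once (i) is in hand. The entropy terms already match by (i), namely $h_{\nu_\lam}(f_\lam) = h_\nu(f_{\lam_0})$, so it suffices to match the potential terms. Unwinding $\Phi_{\lam_0,\lam}^{\L} = p_\lam \circ (p_{\lam_0}|_\L)^{-1}$, for $\gamma \in \L$ one computes $(\Phi_{\lam_0,\lam}^{\L})^{-1}(p_\lam(\gamma)) = p_{\lam_0}(\gamma) = \gamma(\lam_0)$ with $p_\lam(\gamma) \in \L_\lam$, so that $\psi_\lam \circ p_\lam = \psi \circ p_{\lam_0}$ holds $\M$-almost everywhere. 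Pushing forward gives
\[
\int \psi_\lam \, d\nu_\lam = \int (\psi_\lam \circ p_\lam)\, d\M = \int (\psi \circ p_{\lam_0})\, d\M = \int \psi\, d\nu,
\]
and adding the matching entropy terms yields $P_{\nu_\lam}(\psi_\lam) = P_\nu(\psi)$. I expect the lamination step to be the genuine difficulty; everything after it is a transfer of structure together with this one-line computation.
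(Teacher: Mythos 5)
Your proposal is correct in substance and follows the paper's strategy for the transfer steps: once a Borel $\F$-invariant lamination $\L$ of full $\M$-measure is available, $p_\lam|_\L$ is an injective Borel map between Polish/Borel sets, hence a Borel isomorphism (Lusin--Souslin, which is exactly the Kuratowski reference the paper uses), and ergodicity, the entropy identities, and the change-of-variables computation $\int\psi_\lam\,\nu_\lam=\int\psi\,\nu$ in (ii) go through exactly as you write them. The two places where you diverge are worth noting. First, you treat the construction of the lamination as the ``main obstacle'' and sketch how to extract it from the non-crossing of generic graphs; the paper does not redo this work but takes both the ergodic $(\lam_0,\nu)$-web \emph{and} the $\F$-invariant $\M$-lamination directly from \cite[Corollary 1.5]{BR22}, so your sketch is re-deriving (in outline only) what the cited result already supplies. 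Second, for uniqueness the paper argues directly: given another web $\M'$, it asserts $\M'(\L)=\nu(\L_{\lam_0})=1$ and concludes by Lemma \ref{l:uniqueness_web}; you instead average the two webs, apply the lamination statement to the average, and deduce that both webs charge the same lamination before invoking the lemma. Your convexity route is clean and even somewhat more careful (it avoids having to justify directly why an \emph{arbitrary} web gives full mass to the particular lamination $\L$ attached to $\M$, a point the paper passes over quickly), but it rests on the stronger, only-sketched claim that \emph{every} $(\lam_0,\nu)$-web --- in particular the averaged one --- is carried by a lamination, whereas the paper only needs this for the one web produced by \cite{BR22}. If you intend to keep the convexity argument, you should either prove that stronger claim or check that it is what \cite{BR22} actually delivers; otherwise the paper's shorter route via Lemma \ref{l:uniqueness_web} is available.
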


\begin{remark}\label{r:acritical}
The web $\M$ in Proposition \ref{p:nuweb}  is actually \emph{acritical} (see \cite[Definition 1.3]{BR22} and \cite[Theorem 1.4]{BR22}). We do not stress the acriticality in this paper, since the existence of an $\mathcal{M}$-lamination guarantees this condition, and we only work with such laminations.  
\end{remark}

\begin{remark}
If we are allowed to shrink $M$ to some neighbourhood of $\lam_0$ (depending on $\nu$),  
the assumption on $\nu$
can be weakened as in \cite{BR22}, namely it is enough that $\nu$ is a measure on the Julia set with 
strictly positive Lyapunov exponents and not giving  mass to the post-critical set of $f_{\lam_0}$,
see \cite[Theorem 1.4]{BR22}.
The fact that any $\nu$ satisfies these conditions is a consequence of 
\cite{dTh08,D12} and \cite{dTh06,D07}. 
\end{remark}

\begin{proof}[Proof of Proposition \ref{p:nuweb}]

We first prove (i). By \cite[Corollary 1.5]{BR22} there exists an ergodic  $(\lam_0, \nu)$-web $\M$ and an $\F$-invariant $\M$-lamination $\L$.
To show the uniqueness property, consider another $(\lam_0, \nu)$-web $\M'$. We have $\M'(\L) = \nu(\L_{\lam_0}) = \M(\L) = 1$. Lemma \ref{l:uniqueness_web} then yields $\M = \M'$.
Take $\lam \in M$.
The application $p_\lam|_{\L} : \L \to \L_\lam$ is continuous, hence Borel measurable. The space $\J$ being Polish, as a closed subspace of the space $\O(M,\P^k)$ of all holomorphic functions from $M$ to $\P^k$, since $\L\subset \J$ is a Borel subset, the inverse application $p_\lam|_{\L}^{-1}$ is automatically Borel measurable (see for example \cite[Theorem 3 \S 39.V]{Kur66}).
Since $\nu_\lam = (p_\lam)_*\M$, the two measurable systems $(\L, \F, \M)$ and $(\L_\lam, f_\lam, \nu_\lam)$ are then conjugated by $p_\lam$.
The conclusion follows from the fact that
the measure-theoretic entropy is invariant under measurable conjugacy between subsets of full measure.\\

We now prove (ii). Fix $\lam \in M$. The above shows that the measurable isomorphism $\Phi_{\lam_0,\lam}^{\mathcal L}$ conjugates the two systems $(\L_{\lam_0}, f_{\lam_0}, \nu)$ and $(\L_{\lam}, f_{\lam}, \nu_\lam)$, and that $h_{\nu}(f_{\lam_0}) = h_{\nu_\lam}(f_\lam)$. It remains to show that $\int \psi\nu=\int\psi_\lam\nu_\lam$.
Since $(\Phi_{\lam_0,\lam}^{\mathcal L})_*\nu_\lam=\nu$ and $(\Phi_{\lam_0,\lam}^{\mathcal L})^*\psi_\lambda=\psi$ on $\mathcal{L}_{\lam_0}$, we have
\begin{align*}
     \int\psi_\lam\nu_\lam&=\int_{\mathcal L_\lambda}\psi_\lambda\nu_\lam=\int_{\Phi_{\lam_0,\lam}^{\mathcal L}(\mathcal{L}_{\lambda_0})}\psi_\lambda\nu_\lam\\
    &=\int_{\mathcal{L}_{\lambda_0}}(\Phi_{\lam_0,\lam}^{\mathcal L})^*\psi_\lambda\cdot (\Phi_{\lam_0,\lam}^{\mathcal L})^*\nu_\lam=\int_{\mathcal{L}_{\lambda_0}}\psi\nu=\int\psi\nu.
\end{align*}
The proof is complete.
\end{proof}

\begin{remark}
Since the mapping 
$\Phi_{\lam_0,\lam}^{\mathcal L}$ is a measurable conjugacy, the proof of Proposition \ref{p:nuweb} shows that the measure $\nu_\lam$ inherits the statistical properties that $\nu$ may have, 
such as (exponential) mixing,
Central Limit Theorem, Almost Sure Invariant Principle or Large Deviation Principle (see \cite{BD20, BD22, Dup10, SUZ, UZ13} for large classes of examples of measures satisfying these properties). On the other hand, as the space of H\"older functions is in general not preserved by the measurable conjugacy, the space of observables associated with each $\nu_\lam$ will in general depend on $\lambda$.
\end{remark}

\begin{proof}[Proof of the Main Theorem]

Recall that $\mu_\lam$ denotes the measure of maximal entropy of the endomorphism $f_\lam$.
Up to adding to $\psi$ a constant, for simplicity, we will assume that $\inf \psi =0$.
Then, by the definition of the equilibrium state $\nu$, we have
\begin{equation}\label{eq:inequality_1}
h_{\nu}(f_{\lam_0}) + \int\psi\nu = P_{\nu}(\psi) \ge P_{\mu_{\lam_0}}(\psi) = k\log d +  \int\psi \mu_{\lam_0} \ge k \log d.
\end{equation}
By assumption, we have
$\int\psi\nu< \log d$. Hence, \eqref{eq:inequality_1} yields $h_{\nu}(f_{\lam_0}) > (k-1) \log d$.
Proposition \ref{p:nuweb} then yields a unique $(\lam_0,\nu)$-web $\M$, which proves assertion (i).\\

Denote $\nu_\lam = (p_\lam)_* \M$. Observe that Proposition \ref{p:nuweb} also gives that $h_{\nu_\lam}(f_\lam) > (k-1) \log d$ for any parameter $\lam \in M$.
For every parameter $\lam \in M$, consider the Borel function $\psi_\lam : \P^k \to \R$ as defined in Proposition \ref{p:nuweb}.
To prove assertion (ii), consider a parameter $\lam \in M$ and assume that
there exists an ergodic $f_\lam$-invariant measure $\omega_\lam$ such that
\begin{equation}\label{eq:pressure_inequality_1}
P_{\omega_\lam}(\psi_\lam) \ge P_{\nu_\lam}(\psi_\lam).
\end{equation}
Up to replacing $\omega_\lam$ by $\mu_\lam$
if
$P_{\omega_\lam}(\psi_\lam) < P_{\mu_\lam}(\psi_\lam)$,
we can also assume that
$h_{\omega_\lam}(f_\lam) > (k-1)\log d$.

Proposition \ref{p:nuweb} then yields a $(\lam, \omega_\lam)$-web $\M'$ and
we have 
\begin{equation}\label{eq:final_argument_2}
P_{\omega}(\psi) = P_{\omega_\lam}(\psi_\lam) \ge P_{\nu_\lam}(\psi_\lam) = P_{\nu}(\psi),
\end{equation}
where $\omega := (p_{\lam_0})_*\M'$.
 As $\nu$ is an equilibrium state with respect to $\psi$, the inequality in \eqref{eq:final_argument_2}
 is actually an equality. Thus, $\nu_\lam$ is an equilibrium state with respect to $\psi_\lam$.\\
 
 Assume now that $\nu$ is unique. Then, \eqref{eq:final_argument_2} shows that $\omega$ as above is an equilibrium state with respect to $\psi$ as well. By the uniqueness assumption, we have $\omega = \nu$ and both $\M$ satisfying $\M'$ are $(\lam_0, \nu)$-webs and $\M'(\L) = \M'(p_{\lam_0}^{-1}(\L_{\lam_0}) = \omega(\L_{\lam_0}) = \nu(\L_{\lam_0}) = \M(\L) = 1$. Lemma \ref{l:uniqueness_web} then yields $\M = \M'$. In particular, one has $\omega_\lam = \nu_\lam$, which ends the proof.
\end{proof}

\begin{remark}
 As we have seen above, the condition $\sup \psi - \inf \psi < \log d$ guarantees that we have $ h_\nu(f_{\lam_0})>(k-1)\log d$. This,  
 thanks to Proposition \ref{p:nuweb},
 is one of the main ingredients in the proof of the Main Theorem. Let us assume that we are just given
 an equilibrium state $\nu$ with respect to some bounded Borel function $\psi$ 
 (with no further conditions)
 satisfying $ h_\nu(f_{\lam_0})>(k-1)\log d$.  
Then, with the notations as in the proof of the Main Theorem,
Proposition \ref{p:nuweb} still 
implies that $P_{\nu_\lam}(\psi_\lam) = P_{\nu}(\psi)$. On the other hand, in this case, a priori $\nu_\lambda$ may not be an equilibrium state of $\psi_\lambda$. Indeed, following the proof of the Main Theorem,
let $\omega_\lambda$ be an $f_\lambda$-invariant measure satisfying \eqref{eq:pressure_inequality_1}.
 Even if $P_{\omega_\lam}(\psi_\lam) \ge k \log d$, since we do not have any condition on $\psi$, the integral $\int\psi_\lam\omega_\lam$ can be large enough so that 
$ h_{\omega_\lambda}(f_{\lam})\le(k-1)\log d$. 
  Hence, in this case, we cannot apply Proposition \ref{p:nuweb} to obtain \eqref{eq:final_argument_2}.
  \end{remark}

\section{The web $\M$ is an equilibrium state}\label{s:big_system}

In this section, we observe that the system $(\J, \F)$ itself is well-suited for the thermodynamical formalism. To this purpose, an additional compactness assumption is necessary. Define an \emph{equilibrium state} for the system $(\J,\F)$ with respect to a Borel bounded function $\Psi : \J \to \R$ as an ergodic $\F$-invariant compactly supported probability measure $\M$ on $\J$ maximizing the functional $P_\M(\Psi) = h_\M(\F) + \int \Psi \M$.
We have to ensure that the set of ergodic $\F$-invariant compactly supported measures on $\J$ is not empty. In general, one may need to shrink $M$ so that $\mathcal J$ becomes compact. Assuming stability,  that is simply guaranteed by the existence of an
equilibrium web as in \cite{BBD18}.\\

In this framework, given an equilibrium state $\nu$ with respect to a weight function $\psi$ at a parameter $\lam_0$, we construct a naturally defined weight function $\Psi$ on $\J$, compatible with the measurable holomorphic motion, with respect to which the $(\lam_0,\nu)$-web $\M$ from the Main Theorem is an equilibrium state. We show as well that the uniqueness property holds.
In particular, the equilibrium web of Berteloot-Bianchi-Dupont is the unique measure of maximal entropy for the dynamical system $(\J,\F)$.

\begin{proposition}\label{p:M-eq-state}
Under the hypothesis of the Main Theorem, there exists a Borel bounded function $\Psi : \J \to \R$ such that $\Psi = \psi \circ p_{\lam_0}$ $\M$-a.e. and
$P_\M(\Psi) = P_{\nu}(\psi)$. Moreover, the $(\lam_0,\nu)$-web $\M$ is an (resp. the unique) equilibrium state for the system $(\J,\F)$ with respect to $\Psi$.
\end{proposition}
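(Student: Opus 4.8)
The plan is to construct the weight $\Psi$ on $\mathcal J$ and then transport the equilibrium property of $\nu$ from the system $(J_{\lam_0}, f_{\lam_0})$ up to the big system $(\mathcal J, \mathcal F)$ via the measurable conjugacy provided by Proposition \ref{p:nuweb}. First I would fix the $\mathcal F$-invariant $\mathcal M$-lamination $\mathcal L$ guaranteed by Proposition \ref{p:nuweb}(i), and simply \emph{define} $\Psi := \psi \circ p_{\lam_0} \cdot \mathds{1}_{\mathcal L} + c\cdot \mathds{1}_{\mathcal J \setminus \mathcal L}$ for a suitable bounded constant $c$ (for instance $c = \inf \psi$). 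Since $\mathcal M(\mathcal L) = 1$, this $\Psi$ is Borel, bounded, and agrees with $\psi \circ p_{\lam_0}$ on a set of full $\mathcal M$-measure, so $\int \Psi \, \mathcal M = \int \psi \circ p_{\lam_0} \, \mathcal M = \int \psi \, \nu$ because $(p_{\lam_0})_*\mathcal M = \nu$. Combined with $h_{\mathcal M}(\mathcal F) = h_\nu(f_{\lam_0})$ from Proposition \ref{p:nuweb}(i), this immediately gives $P_{\mathcal M}(\Psi) = h_{\mathcal M}(\mathcal F) + \int \Psi \, \mathcal M = h_\nu(f_{\lam_0}) + \int \psi \, \nu = P_\nu(\psi)$.

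Next I would establish that $\mathcal M$ maximizes the pressure functional $P_\bullet(\Psi)$ over ergodic $\mathcal F$-invariant compactly supported measures. The key observation is that any such competitor measure $\mathcal M'$ projects under $p_{\lam_0}$ to an $f_{\lam_0}$-invariant measure $\omega := (p_{\lam_0})_*\mathcal M'$, and by the intertwining $p_{\lam_0} \circ \mathcal F = f_{\lam_0} \circ p_{\lam_0}$ together with the general inequality $h_\omega(f_{\lam_0}) \le h_{\mathcal M'}(\mathcal F)$ for a factor map, one controls the entropy term. For the integral term, if $\mathcal M'$ also gives full measure to $\mathcal L$ (or more generally concentrates on the laminated region), then $\int \Psi \, \mathcal M' = \int \psi \, \omega$ as before. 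This would yield $P_{\mathcal M'}(\Psi) \le P_\omega(\psi) \le P_\nu(\psi) = P_{\mathcal M}(\Psi)$, where the middle inequality is exactly the statement that $\nu$ is an equilibrium state for $\psi$. Thus $\mathcal M$ is an equilibrium state for $\Psi$.

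I expect the main obstacle to be handling competitor webs $\mathcal M'$ that do \emph{not} give full mass to the lamination $\mathcal L$, since for such measures the factor map $p_{\lam_0}$ may fail to be injective and the clean identity $\int \Psi \, \mathcal M' = \int \psi \, \omega$ together with the entropy comparison need not hold in the favorable direction. The way around this, mirroring the proof of the Main Theorem, is to reduce to measures of high entropy: if $P_{\mathcal M'}(\Psi) \ge P_{\mathcal M}(\Psi) \ge k \log d$, then since $\Psi$ is bounded with $\sup \Psi - \inf \Psi < \log d$ (inherited from $\psi$), one deduces $h_{\mathcal M'}(\mathcal F) > (k-1)\log d$; pushing forward, $\omega$ has entropy $> (k-1)\log d$ and falls in the scope of Proposition \ref{p:nuweb}, which forces the associated web to be the unique one supported on a lamination. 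This lets me argue that any pressure-maximizing $\mathcal M'$ must itself be laminated, returning to the favorable case above.

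Finally, for the uniqueness clause, I would invoke the hypothesis that $\nu$ is the unique equilibrium state for $\psi$. Given any equilibrium state $\mathcal M'$ for $(\mathcal J, \mathcal F, \Psi)$, the chain of inequalities above must be equalities, so $\omega = (p_{\lam_0})_*\mathcal M'$ is an equilibrium state for $(J_{\lam_0}, f_{\lam_0}, \psi)$ and hence equals $\nu$ by uniqueness. Then $\mathcal M'$ is a $(\lam_0, \nu)$-web with $\mathcal M'(\mathcal L) = 1$, so Lemma \ref{l:uniqueness_web} gives $\mathcal M' = \mathcal M$. The assertion about the measure of maximal entropy is the special case $\psi \equiv 0$ (where $\nu = \mu_{\lam_0}$, $\mathcal M$ is the equilibrium web of Berteloot-Bianchi-Dupont, and $\Psi \equiv 0$), giving that $\mathcal M$ is the unique measure of maximal entropy for $(\mathcal J, \mathcal F)$.
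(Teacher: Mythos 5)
Your overall strategy is the same as the paper's: you define $\Psi$ to agree with $\psi\circ p_{\lam_0}$ on the $\mathcal F$-invariant $\M$-lamination $\L$ and to be (essentially) constant off it, compute $P_\M(\Psi)=P_\nu(\psi)$ from $h_\M(\F)=h_\nu(f_{\lam_0})$ and $(p_{\lam_0})_*\M=\nu$, push any competitor $\M'$ down to $\omega=(p_{\lam_0})_*\M'$ to invoke the equilibrium property of $\nu$, and conclude uniqueness via Lemma \ref{l:uniqueness_web} (equivalently, the uniqueness clause of Proposition \ref{p:nuweb}(i)). All of that matches the paper, and you are right to single out the delicate point, namely competitors $\M'$ that need not charge $\L$.

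However, your proposed resolution of that point contains a step that goes in the wrong direction. From $P_{\M'}(\Psi)\ge P_\M(\Psi)\ge k\log d$ and $\sup\Psi-\inf\Psi<\log d$ you correctly get $h_{\M'}(\F)>(k-1)\log d$, but you then claim that ``pushing forward, $\omega$ has entropy $>(k-1)\log d$.'' For the factor map $p_{\lam_0}:(\J,\F,\M')\to(J_{\lam_0},f_{\lam_0},\omega)$ the general inequality is $h_\omega(f_{\lam_0})\le h_{\M'}(\F)$, so a lower bound on $h_{\M'}(\F)$ gives no lower bound on $h_\omega(f_{\lam_0})$. The equality $h_\omega(f_{\lam_0})=h_{\M'}(\F)$ is available only once you know $\M'$ is carried by a lamination, which is exactly what you are trying to establish via the entropy bound on $\omega$ — so as written the argument is circular. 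What is needed is a lower bound on $h_\omega(f_{\lam_0})$ itself (so that \cite{BR22} applies to $\omega$ and forces $\M'$ to be the unique, laminated, $(\lam_0,\omega)$-web, after which $h_{\M'}(\F)=h_\omega(f_{\lam_0})$ and $\int\Psi\,\M'\le\int\psi\,\omega$ give $P_{\M'}(\Psi)\le P_\omega(\psi)\le P_\nu(\psi)$), or else a separate argument disposing of the case $h_\omega(f_{\lam_0})\le(k-1)\log d$. The paper's own proof handles this point by directly applying Proposition \ref{p:nuweb} to $\omega$, i.e.\ it presupposes the entropy hypothesis on $\omega$ rather than deriving it from the pressure bound on $\M'$; your attempt to derive it is where the argument breaks. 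The remaining parts of your proposal (the integral comparison $\int\Psi\,\M'\le\int\psi\,\omega$ using $\inf\psi=0$, the uniqueness argument, and the observation that $\psi\equiv 0$ recovers the equilibrium web as the unique measure of maximal entropy) are correct and coincide with the paper.
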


\begin{proof}
As in the proof of the Main Theorem,
up to adding to $\psi$ a constant, for simplicity, we will assume that $\inf \psi =0$. As in the proof of Proposition \ref{p:nuweb}, let $\mathcal{L}$ be a $\mathcal{M}$-lamination 
 (provided by \cite{BR22})
 and define 
$\Psi := \psi \circ p_{\lam_0}|_\L \cdot \mathds{1}_\L$.
We first prove the equality $P_\M(\Psi) = P_{\nu}(\psi_{\lam_0})$. By Proposition \ref{p:nuweb} we have $h_{\M}(\F)=h_{\nu}(f_{\lam_0}).$ So, it is enough to prove that $\int\Psi\M= \int\psi\nu$.
Since $\M(\L)=1$, we have
\[     \int\Psi\M=\int_{\mathcal L}\psi \circ p_{\lam_0}\M=\int_{\mathcal L}(p_{\lam_0})^*(\psi\cdot (p_{\lam_0})_*\M)
=\int_{\mathcal{L}_{\lambda_0}}\psi\nu=h_{\nu}(f_{\lam_0}),
    \]
which gives the desired equality.\\

We now prove the last assertion. Assume there exists another ergodic $\F$-invariant Borel probability measure $\M'$ with compact support on $\J$ such that
$$
P_{\M'}(\Psi) \ge P_{\M}(\Psi).
$$
Define $\omega := (p_{\lam_0})_*\M'$, so that the measure $\M'$ is a $(\lam_0, \omega)$-web.
Observe that Proposition \ref{p:nuweb} yields \begin{equation}\label{eq:first_inequ}
P_{\omega}(\psi) = P_{\M'}(\Psi) \ge P_\M(\Psi) = P_{\nu}(\psi).
\end{equation}
As $\nu$ is an equilibrium state with respect to $\psi$, we have that $P_{\nu}(\psi) \ge P_{\omega}(\psi)$. Hence \eqref{eq:first_inequ} yields $P_{\M'}(\Psi) = P_\M(\Psi)$.
This shows that $\mathcal M$ is an equilibrium state for $\Psi$.
Moreover, if $\nu$ is the unique equilibrium state, then $\omega=\nu$.   Then Proposition \ref{p:nuweb} (i) 
implies that $\M' = \M$. The proof is complete. 
\end{proof}

\bibliographystyle{alpha}

\end{document}